\documentclass[preprint,12pt]{elsarticle}
\usepackage{amsmath}
\usepackage{amssymb}
\usepackage{amsthm}
\usepackage{epsfig}

\oddsidemargin=.15in
\evensidemargin=.15in
\textwidth=6in
\topmargin=-.5in
\textheight=9in
\parindent=.5in

\newtheorem{theorem}{Theorem}
\newtheorem{prop}[theorem]{Proposition}
\newtheorem{lemma}[theorem]{Lemma}
\newtheorem{cor}[theorem]{Corollary}

\newtheorem{definition}{Definition}

\newcommand{\EE}{\mathbb{E}}

\newcommand{\ZZ}{\mathbb{Z}}

\newcommand{\scs}{\textsc{scs}}
\newcommand{\lcs}{\textsc{lcs}}
\newcommand{\uscs}{\underline{\textsc{scs}}}
\newcommand{\ulcs}{\underline{\textsc{lcs}}}
\newcommand{\oscs}{\overline{\textsc{scs}}}
\newcommand{\olcs}{\overline{\textsc{lcs}}}
\newcommand{\sud}{\textsc{Sud}}
\newcommand{\mnc}{\textsc{mnc}}

\DeclareMathOperator{\ch}{ch}

\journal{Discrete Mathematics}
\begin{document}

\begin{frontmatter}
\title{Critical Sets for Sudoku and General Graphs}
\author{Joshua Cooper\corref{cor1}\fnref{label2}}
\ead{cooper@math.sc.edu}
\ead[url]{http://www.math.sc.edu/~cooper/}
\fntext[label2]{This work was funded in part by NSF grant DMS-1001370.}
\cortext[cor1]{Department of Mathematics, University of South Carolina, 1523 Greene St., Columbia SC 29210}
\address{1523 Greene St., Columbia, SC 29210}
\author{Anna Kirkpatrick\corref{cor1}\corref{cor2}}
\ead{kirkpate@email.sc.edu}
\address{1523 Greene St., Columbia, SC 29210.}
\cortext[cor2]{Corresponding author: +1 864 349 6669}
\begin{abstract}
We discuss the problem of finding {\it critical sets} in graphs, a concept which has appeared in a number of guises in the combinatorics and graph theory literature.  The case of the Sudoku graph receives particular attention, because critical sets correspond to minimal fair puzzles.  We define four parameters associated with the sizes of extremal critical sets and (a) prove several general results about these parameters' properties, including their computational intractability, (b) compute their values exactly for some classes of graphs, (c) obtain bounds for generalized Sudoku graphs, and (d) offer a number of open questions regarding critical sets and the aforementioned parameters.
\end{abstract}

\begin{keyword}
Sudoku, critical set, mininum number of clues.
\MSC[2010] 05C15 \sep 05B15 \sep 68R10 \sep 05C35
\end{keyword}
\end{frontmatter}

A recent announcement due to McGuire, et al. (\cite{CMT12}), surprised many in the community of Sudoku researchers, amateur and professional alike, with its complete resolution of the ``minimum number of clues'' (\mnc) problem.  {\em Sudoku} is a single-player game in which one completes a partial $9 \times 9$ matrix $M$ all of whose entries are drawn from $\{1,2,3,4,5,6,7,8,9\}$ by appealing to the {\em rules}: no number may appear twice in any row, any column, or any of the nine ``blocks'', each a $3 \times 3$ submatrix with indices $\{3a+1,3a+2,3a+3\} \times \{3b+1,3b+2,3b+3\}$ for $a, b \in \{0,1,2\}$.  A ``board'' is a matrix adhering to these rules; a ``puzzle'' is a partially filled-in board; the nonempty entries of a puzzle are called ``clues'' or ``givens''.  A puzzle is said to be ``fair'' if it can be completed to a valid board in precisely one way.  The \mnc\, problem asks: what is the fewest number of clues in a fair Sudoku puzzle?  While it was long suspected that the answer is $17$, a proof seemed out of reach until \cite{CMT12}.

However, the sense in which the authors of \cite{CMT12} ``proved'' that the solution is indeed $17$ arguably does not meet modern standards of mathematical rigor.  (The paper briefly acknowledges this deficiency, although popular press' wide reporting of the result typically did not address this important, if subtle, issue.)  There are several interesting ideas presented in the aforementioned manuscript -- mostly careful case reductions and very clever search strategies -- but, in the end, the result relied on a {\em year-long} computation, amounting to $7.1$ million core hours on an SGI Altic ICE 8200EX cluster with 320 nodes, each of which consisted of two Intel Xeon E5650 hex-core processors with 24GB of RAM.  Even if one sets aside well-worn (and important) philosophical critiques of computer-assisted proofs that appeal to uncheckability by humans and the social nature of proof, it is almost inconceivable that this enormous computation on an extremely complicated configuration of networked and nested devices did not experience hardware errors (due to manufacturing defects, cosmic rays, background radiation, the inherent stochasticity of quantum mechanics, etc.) {\em and} software errors (bugs in the various operating systems, firmware, algorithmic code, GUIs, etc.).  While such errors may not have produced an incorrect answer, they certainly undermine the definitiveness of the result.  Therefore, we wish to draw attention to the subject matter of ``critical sets'' for graph colorings, a concept that neatly generalizes the \mnc\, problem as well as several other questions scattered throughout the discrete mathematics literature, in the hopes that greater visibility might eventually lead to human-readable solutions to questions like the \mnc.

We begin by defining ``determining sets'' for graph colorings: a set $S$ of vertices with the property that the coloring, restricted to $S$, can be completed in precisely one way (i.e., back to the original coloring).

\begin{definition} A ``determining set'' of vertices in a graph $G = (V,E)$ with respect to a proper vertex coloring $c : V \rightarrow [\chi(G)]$ is a set $S \subseteq V$ with the property that, for any proper vertex coloring $c^\prime$ of $G$, if $c^\prime |_{S} \equiv c |_{S}$, then $c^\prime \equiv c$ on all of $V$.
\end{definition}

\noindent If a determining set is minimal with respect to this property, we call it ``critical''.

\begin{definition} A ``critical set'' of vertices in a graph $G = (V,E)$ with respect to a proper vertex coloring $c : V \rightarrow [\chi(G)]$ is a minimal determining set for the pair $(G,c)$.
\end{definition}

The cardinality of the largest and smallest critical sets in various graphs have appeared in a number of guises.  Latin squares (and, of course, the special subclass of them that comprise Sudoku boards), matching theory, design theory, and the study of dominating vertex sets all feature variants of this idea.  (See \cite{HM06} for a more comprehensive list of related topics and references.)  Therefore, we define the following parameters.

\begin{definition} For a graph $G = (V,E)$ and a vertex coloring $c : V \rightarrow [\chi(G)]$, define
\[
\scs(G,c) = \min \{|X| : X \text{ is a critical set for } (G,c)\}
\]
and
\[
\lcs(G,c) = \max \{|X| : X \text{ is a critical set for } (G,c)\}.
\]
\end{definition}

\begin{definition} For a graph $G = (V,E)$, define
\[
\uscs(G) = \min_{\substack{c : V(G) \rightarrow [\chi(G)] \\ c \text{ proper}}} \scs(G,c)
\]
\[
\ulcs(G) = \min_{\substack{c : V(G) \rightarrow [\chi(G)] \\ c \text{ proper}}} \lcs(G,c).
\]
Similarly, define
\[
\oscs(G) = \max_{\substack{c : V(G) \rightarrow [\chi(G)] \\ c \text{ proper}}} \scs(G,c)
\]
\[
\olcs(G) = \max_{\substack{c : V(G) \rightarrow [\chi(G)] \\ c \text{ proper}}} \lcs(G,c).
\]
\end{definition}

A few words on notation.  For graph-theoretic concepts, unless stated explicitly, we generally rely on the conventions of \cite{D00}; in particular, we sometimes write the edge $\{x,y\}$ simply as $xy$.  For a vertex $v \in V(G)$, we define the ``neighborhood'' of $v$ to be $N_G(v) = \{w \in V(G) : vw \in E(G)\}$, where the subscript may be omitted if it is obvious.  The symbol ``$\Box$'' denotes Cartesian product and ``$\boxtimes$'' the strong graph product.  Given two graphs $G = (U,E)$ and $H = (V,F)$, the vertex sets of $G \Box H$ and $G \boxtimes H$ are both $U \times V$.  The edge set of the former is given by all pairs of the form $\{(u,v),(u^\prime,v^\prime)\}$ with $[(u=u^\prime)\wedge(vv^\prime \in E(H))] \vee [(v=v^\prime)\wedge(uu^\prime \in E(G))]$; the edge set of the latter is given by all pairs of the form $\{(u,v),(u^\prime,v^\prime)\}$ with $[(u=u^\prime)\vee(uu^\prime \in E(G))] \wedge [(v=v^\prime)\vee(vv^\prime \in E(H))]$.  Finally, the set $\{1,\ldots,n\}$ is denoted $[n]$.

\section{General Bounds and Observations}

In this section, we introduce a few important definitions and prove some general facts about critical sets.

\begin{definition} A coloring $c : V(G) \rightarrow [k]$ of a graph $G$ is ``optimal'' if $k = \chi(G)$.
\end{definition}

\begin{definition} A graph $G$ is ``uniquely colorable'' if it has exactly one optimal coloring up to permutation of the colors.
\end{definition}

\begin{definition} A graph $G$ is ``critically $k$-uniform'' if every critical set $S \subset V(G)$ satisfies $|S| = k$; it is ``critically uniform'' if it is critically $k$-uniform for some $k$.
\end{definition}

Note that the condition of $G$ being critically $k$-uniform is equivalent to the statement that
\[
k = \olcs(G) = \ulcs(G) = \oscs(G) = \uscs(G).
\]

\begin{prop} \label{prop:uniquelycolorableimpliessamesize} If a graph $G$ is uniquely colorable, then $G$ is critically $(\chi(G)-1)$-uniform.
\end{prop}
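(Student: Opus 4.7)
The plan is to exploit the very rigid structure of colorings on a uniquely colorable graph. Let $\chi = \chi(G)$ and let $c$ be the (essentially unique) optimal coloring. The first step is to observe that every proper coloring of $G$ must have the form $\sigma \circ c$ for some permutation $\sigma$ of the color set $[\chi]$: this is just the definition of unique colorability, combined with the fact that on an optimal coloring every one of the $\chi$ colors actually appears (otherwise fewer colors would suffice).

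Next I would translate the determining-set condition into a purely combinatorial statement about the image $c(S)$. Given $S \subseteq V$, a coloring $c' = \sigma \circ c$ satisfies $c'|_S \equiv c|_S$ precisely when $\sigma$ fixes every element of $c(S) \subseteq [\chi]$. Since all $\chi$ colors are used, $\sigma \circ c \equiv c$ forces $\sigma = \mathrm{id}$. Hence $S$ is a determining set if and only if the only $\sigma \in S_\chi$ fixing $c(S)$ pointwise is the identity, which is equivalent to $|c(S)| \geq \chi - 1$: if two colors are missed, swapping them yields a nontrivial $\sigma$ fixing $c(S)$; if at most one color is missed, any $\sigma$ fixing $c(S)$ pointwise fixes at least $\chi - 1$ of the $\chi$ symbols and is therefore the identity.

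Now I would analyze minimality. Split into two cases based on $|c(S)|$. If $|c(S)| = \chi$, then deleting any single $v \in S$ drops at most one color, leaving $|c(S \setminus \{v\})| \geq \chi - 1$, so $S \setminus \{v\}$ is still determining, contradicting minimality. Hence any critical $S$ satisfies $|c(S)| = \chi - 1$. For such $S$, if some color $i \in c(S)$ were achieved by two distinct vertices $u, v \in S$, then $S \setminus \{u\}$ would still cover $\chi - 1$ colors and remain determining, again contradicting minimality. Therefore each color in $c(S)$ is represented by exactly one vertex, giving $|S| = |c(S)| = \chi - 1$.

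I do not anticipate a substantive obstacle; the argument is essentially a bookkeeping exercise once one recognizes that the space of proper colorings is just the orbit of $c$ under $S_\chi$. The only pitfall to watch is the step invoking that every color is used by $c$ — this relies on optimality ($k = \chi(G)$), not just proper coloring — so I would state this explicitly. The conclusion that $|S| = \chi - 1$ for every critical $S$ is exactly the definition of critical $(\chi - 1)$-uniformity.
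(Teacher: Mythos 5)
Your proof is correct and follows essentially the same route as the paper's: both arguments rest on the observation that unique colorability reduces the space of proper colorings to the orbit of $c$ under permutations of $[\chi(G)]$, so that $S$ is determining iff $c(S)$ misses at most one color, and minimality then forces $|S| = |c(S)| = \chi(G)-1$. Your version merely packages this more cleanly by stating the criterion $|c(S)| \geq \chi(G)-1$ explicitly before invoking minimality, where the paper splits the same content into a separate lower-bound and upper-bound argument.
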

\begin{proof} First, every critical set $S$ must have cardinality at least $\chi(G)-1$; if $|S| < \chi(G)-1$, then, given any coloring $c : V(G) \rightarrow [\chi(G)]$, there are at least two extensions of $c|_S$ to a proper coloring: $c$ itself and $c^\prime$, where
\[
c^\prime(v) = \left \{ \begin{array}{ll} c(v) & \text{ if } c(v) \in c(S) \\ \pi(c(v)) & \text{ if } c(v) \not \in c(S), \end{array} \right .
\]
where $\pi$ is any permutation of $[\chi(G)]$ so that $\pi|_{[\chi(G)] \setminus c(S)}$ is not the identity function.

Then, suppose $S$ is a critical set for the coloring $c : V(G) \rightarrow [\chi(G)]$.  Note that, if $c(v) = c(w)$, then at most one of $v$ or $w$ is in $S$, as the partition into color classes is uniquely determined; therefore, $|S| = |c(S)|$.  If $|S| = |c(S)| = \chi(G)$, then let $S^\prime = S \setminus \{c^{-1}(\chi(G))\}$.  Since the partition $\Pi = \{c^{-1}(1),\ldots,c^{-1}(\chi(G))\}$ is unique, and $\tilde{c} = c|_{S^\prime}$ determines the color of all blocks but one, the last block must be colored $\chi(G)$, and the only proper coloring extending $\tilde{c}$ is $c$ itself. Therefore, $|S| \leq \chi(G)-1$.
\end{proof}

Note that it is not true that, if a graph is critically uniform, then it is uniquely colorable, as evidenced by the graph obtained by joining a pendant edge to each vertex of a $K_3$.  (In this case, all four parameters equal $4$.)  However, we have been unable to determine the status of the full converse of Proposition \ref{prop:uniquelycolorableimpliessamesize}: whether a graph being $(\chi(G)-1)$-uniform implies unique colorability.

\begin{cor} \label{cor:bipartite} If $G$ is bipartite and consists of $k$ components, then $G$ is critically $k$-uniform.
\end{cor}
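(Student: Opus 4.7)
The plan is to decompose $G$ into its connected components $G_1,\ldots,G_k$ and show that any critical set contains exactly one vertex from each component. Fix a proper $2$-coloring $c$ of $G$, implicitly assuming $G$ has at least one edge so that $\chi(G)=2$; the edgeless case is handled by Proposition~\ref{prop:uniquelycolorableimpliessamesize}, since then $G$ is uniquely $1$-colorable and the empty set is the unique critical set. The key observation, which I will use in both directions, is that each connected bipartite component admits exactly two proper $2$-colorings, related by swapping the two color classes, so specifying the color of a single vertex in $G_i$ forces the coloring on all of $G_i$ by alternating along paths.

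For the lower bound $|S|\geq k$, I show that every determining set $S$ meets every component. If $S\cap V(G_i)=\emptyset$ for some $i$, define $c'$ to agree with $c$ outside $V(G_i)$ but swap the two color classes on $V(G_i)$; then $c'$ is a proper coloring, $c'\neq c$, and $c'|_S=c|_S$, contradicting that $S$ determines $c$. Hence each component contributes at least one vertex to $S$.

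For the upper bound $|S|\leq k$, suppose $S$ is critical and, for contradiction, $|S|\geq k+1$. By pigeonhole some component $G_i$ satisfies $|S\cap V(G_i)|\geq 2$, which forces $G_i$ to have at least two vertices (so it is not an isolated-vertex component). Removing any $v\in S\cap V(G_i)$ leaves $S'=S\setminus\{v\}$ still meeting every component, so by the propagation principle $S'$ remains determining, contradicting the minimality of $S$. Combining both bounds, every critical set has exactly $k$ elements, so $G$ is critically $k$-uniform.

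The argument is essentially bookkeeping once the propagation principle in connected bipartite graphs is in hand, so no serious obstacle arises. The only small subtlety is handling isolated-vertex components correctly: for such a $G_i$ the ``color-class swap'' collapses to flipping the single vertex's color, which is still a proper coloring because the vertex has no neighbors, and such components never contribute two vertices to any $S$, so they are automatically compatible with the pigeonhole step.
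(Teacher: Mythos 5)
Your proof is correct and takes essentially the same route as the paper's: the swap-the-color-classes argument shows every determining set must meet every component, and the propagation of the unique bipartition through each connected component shows that one vertex per component suffices, so minimality forces exactly $k$. (One small quibble: in the edgeless case your appeal to Proposition~\ref{prop:uniquelycolorableimpliessamesize} yields critically $0$-uniform rather than critically $k$-uniform, so that case is not actually ``handled'' --- the corollary as stated fails there, and the paper's proof simply sidesteps this by tacitly assuming $\chi(G)=2$.)
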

\begin{proof} Since $\chi(G) = 2$, every proper coloring $c$ gives the same bipartition of the vertex set of each component.  Therefore, specifying the color of one vertex of each component completely determines the rest, i.e., a set $S \subseteq V(G)$ is determining if it contains a vertex of each component.  Furthermore, one must specify the color of at least one vertex of each component, or else the colors within the omitted component could be swapped in any extension of $c|_S$ to a proper coloring.  The only critical sets in $G$, then, have cardinality $k$.
\end{proof}

\begin{prop} For any graph $G$, $\uscs(G) \leq \oscs(G)$, $\uscs(G) \leq \ulcs(G)$, $\ulcs(G) \leq \olcs(G)$, and $\oscs(G) \leq \olcs(G)$.
\end{prop}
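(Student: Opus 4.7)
The plan is to observe that all four inequalities reduce to elementary monotonicity of $\min$ and $\max$, once we establish the pointwise relation $\scs(G,c) \le \lcs(G,c)$ for every proper coloring $c$.

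First I would verify that for each proper coloring $c : V(G) \to [\chi(G)]$, the collection of critical sets for $(G,c)$ is nonempty. This is true because $V(G)$ itself is trivially a determining set (restricting $c$ to all of $V$ leaves only $c$ as an extension), and any finite determining set contains a minimal one — simply delete vertices one at a time while the remaining set stays determining, and stop when no further deletion is possible. Hence $\scs(G,c)$ and $\lcs(G,c)$ are both well-defined, and since $\scs(G,c)$ is the minimum and $\lcs(G,c)$ the maximum of the same nonempty set of cardinalities, we immediately get $\scs(G,c) \le \lcs(G,c)$ for every proper $c$.

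From here the four inequalities follow by taking mins/maxes over $c$. The first, $\uscs(G) \le \oscs(G)$, and the third, $\ulcs(G) \le \olcs(G)$, are instances of $\min \le \max$ over the nonempty set of proper colorings (which is nonempty because $\chi(G)$-colorings exist by definition of $\chi$). The second inequality $\uscs(G) \le \ulcs(G)$ follows by applying $\min_c$ to both sides of $\scs(G,c) \le \lcs(G,c)$, using the monotonicity $f \le g \Rightarrow \min f \le \min g$. Similarly, the fourth inequality $\oscs(G) \le \olcs(G)$ follows by applying $\max_c$ to both sides.

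There is no real obstacle — the only subtle point is the existence of a critical set for each proper coloring, which I would make explicit at the start of the proof to justify that $\scs$ and $\lcs$ are meaningful quantities. Everything else is bookkeeping with $\min$ and $\max$.
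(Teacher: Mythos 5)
Your proof is correct and is exactly the argument the paper has in mind; the paper's own proof consists of the single word ``Obvious,'' and what you have written is that obvious argument spelled out (pointwise $\scs(G,c)\le\lcs(G,c)$ plus monotonicity of $\min$ and $\max$, after noting that critical sets exist for every proper coloring). No further comment is needed.
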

\begin{proof} Obvious.
\end{proof}

\begin{prop} \label{prop:cantbeall} For any graph $G$, every critical set $S$ satisfies $|S| \leq |V(G)|-1$.
\end{prop}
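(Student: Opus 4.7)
The plan is to show that $V(G)$ itself is never a critical set, which is equivalent to the claim. Since $V(G)$ is trivially a determining set, this amounts to exhibiting a vertex $v$ such that $V(G)\setminus\{v\}$ is already determining, so that $V(G)$ fails to be minimal.

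First I would characterize when $V(G)\setminus\{v\}$ is determining: given the colors of every other vertex fixed to match $c$, a proper extension must assign $v$ a color not appearing on $N(v)$, and for this extension to be forced equal to $c(v)$ the neighborhood must already exhibit all $\chi(G)-1$ colors different from $c(v)$. Equivalently, the closed neighborhood $N[v]$ receives every color under $c$. The task thus reduces to producing at least one vertex with this property.

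The heart of the argument is an existence proof by contradiction. Suppose that for every vertex $v$ there is a color $j_v \in [\chi(G)]\setminus(\{c(v)\}\cup c(N(v)))$. I would then construct a proper coloring $c'$ that entirely avoids color $1$ by evacuating the color class $c^{-1}(1)$: set $c'(v) = j_v$ for each $v \in c^{-1}(1)$ and $c'(v)=c(v)$ otherwise. Properness is the decisive check: on an edge $\{u,w\}$, both endpoints cannot lie in $c^{-1}(1)$ since that set is independent, and if only $u$ was recolored then $c'(u)=j_u$ was chosen to miss all of $c(N(u))$ and in particular $c(w)=c'(w)$. Thus $c'$ is a proper coloring of $G$ with image in $[\chi(G)]\setminus\{1\}$, contradicting the minimality of $\chi(G)$.

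The main subtlety I anticipate is the leap from ``each vertex individually has a missing color'' to ``the entire color class $c^{-1}(1)$ can be simultaneously recolored without introducing any conflict''; once this global substitution is set up, each verification is local and routine. With the distinguished vertex $v$ in hand, $V(G)\setminus\{v\}$ is a strictly smaller determining subset, so $V(G)$ is not minimal, and the proposition follows.
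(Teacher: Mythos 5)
Your proposal is correct and follows essentially the same route as the paper: assume every vertex has a spare color missing from its closed neighborhood, evacuate a single color class by reassigning each of its (pairwise nonadjacent) vertices to such a spare color, and derive a proper $(\chi(G)-1)$-coloring as a contradiction. The only cosmetic differences are that you recolor the class $c^{-1}(1)$ rather than $c^{-1}(\chi(G))$ and that you make explicit the equivalence between ``$V(G)\setminus\{v\}$ is determining'' and ``$N[v]$ sees all $\chi(G)$ colors,'' which the paper leaves implicit.
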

\begin{proof} Suppose $t = \chi(G)$ and $V(G)$ is a critical set for some coloring $c : V(G) \rightarrow [\chi(G)]$.  Let $\mathcal{N}(v) = c(N(v))$.  Note that, for each vertex $v \in V(G)$, $|\mathcal{N}(v)| \leq t - 2$.  Therefore, for each $v \in c^{-1}(\chi(G))$, the set $\mathcal{C}(v) = [\chi(G)-1] \setminus \mathcal{N}(v)$ is nonempty; let $\alpha_v$ be an arbitrary element of $\mathcal{C}(v)$, and define a new coloring $c^\prime : V(G) \rightarrow [\chi(G)]$ by
\[
c^\prime(v) = \left \{ \begin{array}{ll} c(v) & \text{ if } c(v) \in [\chi(G)-1] \\ \alpha_v & \text{ otherwise.} \end{array} \right .
\]
Note that $\chi(G) \not \in c^\prime(V(G))$.  Furthermore, $c^{-1}(\chi(G))$ is an independent set, so $c^\prime$ is a proper coloring of $G$.  Therefore, $c^\prime$ is actually a $(t-1)$-coloring of $G$, contradicting the definition of $t$.
\end{proof}

\section{Specific Graphs}

For even cycles, Corollary \ref{cor:bipartite} gives the values of the four parameters.

\begin{cor} For $n$ even, $C_n$ is critically $1$-uniform.
\end{cor}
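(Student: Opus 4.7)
The statement follows almost immediately from Corollary \ref{cor:bipartite}, so the plan is simply to verify that its hypotheses apply to $C_n$ when $n$ is even. First I would observe that $C_n$ is connected, so it has exactly $k=1$ component. Next I would note the standard fact that $C_n$ is bipartite precisely when $n$ is even: one can exhibit the bipartition by alternating the two color classes around the cycle, and evenness of $n$ guarantees that the alternation closes up consistently.

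With these two observations in hand, Corollary \ref{cor:bipartite} applied with $k=1$ yields that $C_n$ is critically $1$-uniform, which is exactly the claim. The only step that requires any thought at all is the bipartiteness check, and even that is entirely routine; there is no real obstacle here, since the corollary already does all the work of characterizing critical sets in bipartite graphs as transversals of the component set.
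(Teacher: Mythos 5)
Your proof is correct and is exactly the argument the paper intends: the corollary is stated immediately after the remark that Corollary \ref{cor:bipartite} handles even cycles, and your verification that $C_n$ is connected and bipartite for $n$ even is all that is needed.
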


We need only consider the case of $n$ odd now.  Throughout the sequel, we label the vertices of $C_n$ in cyclic order as $v_0,\ldots,v_{n-1}$; the indices are interpreted $\bmod{\,n}$.

\begin{theorem} For $n$ odd,
\[
\uscs(C_n) = \frac{n+1}{2}.
\]
\end{theorem}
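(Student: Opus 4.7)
The plan is to prove matching upper and lower bounds of $(n+1)/2$ via a structural analysis of how a determining set partitions $C_n$ into arcs.

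For the lower bound, fix any proper $3$-coloring $c$ and any determining set $S$. The non-$S$ vertices form maximal arcs in the cycle, which I will call \emph{gaps}. The central claim is that every gap has length at most $1$. Suppose instead that some gap $\{v_a, \ldots, v_{a+k-1}\}$ has $k \ge 2$ vertices, with $S$-neighbors $v_{a-1}, v_{a+k}$ of colors $p, q$. A standard transfer-matrix computation (easily verified by enumeration in the case $k = 2$) shows that the number of proper $3$-colorings of this gap satisfying $c'(v_a) \ne p$ and $c'(v_{a+k-1}) \ne q$ equals $(2^{k+1} - 2(-1)^k)/3$ if $p = q$, and $(2^{k+1} + (-1)^k)/3$ if $p \ne q$; both exceed $1$ for $k \ge 2$. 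Each such coloring of the gap extends (by $c$ on all other vertices) to a proper $3$-coloring of $C_n$ agreeing with $c$ on $S$, yielding multiple extensions and contradicting the determining property. Hence every gap has length $\le 1$, so $n - |S| \le |S|$ and thus $|S| \ge \lceil n/2 \rceil = (n+1)/2$.

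For the upper bound, I would exhibit an explicit coloring together with a critical set of size $(n+1)/2$. Let $S = \{v_0, v_2, v_4, \ldots, v_{n-3}, v_{n-1}\}$, whose induced gaps are the singletons $\{v_1\}, \{v_3\}, \ldots, \{v_{n-2}\}$ together with an empty gap between $v_{n-1}$ and $v_0$. Specializing the above count to $k = 1$ shows that a length-$1$ gap has a unique completion precisely when its two bordering $S$-vertices are differently colored, so it suffices to find a proper $3$-coloring with $c(v_{2k}) \ne c(v_{2k+2})$ for $k = 0, \ldots, (n-5)/2$ and $c(v_{n-3}) \ne c(v_{n-1})$. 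This is straightforward: alternate $c(v_{2k}) \in \{1, 2\}$ for $k = 0, \ldots, (n-3)/2$; set $c(v_{n-1})$ to be the unique color in $\{1, 2, 3\}$ distinct from both $c(v_{n-3})$ and $c(v_0) = 1$; and color each $v_{2k+1}$ with the unique color different from both its $S$-neighbors. Then $S$ is determining, and deleting any single element of $S$ merges two adjacent gaps into one of length $\ge 2$, which by the lower-bound count is no longer uniquely extendable; hence $S$ is critical.

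The main obstacle is the counting step behind the claim that gaps of length $\ge 2$ admit at least two valid extensions. This can be dispatched either by the full transfer-matrix computation quoted above, or more cheaply by handling $k = 2$ via a brief split on whether $p = q$ and then reducing $k \ge 3$ to $k = 2$ by fixing all but the first two vertices of the gap. The remaining work is elementary casework on $n \bmod 4$ in the construction of the coloring, which presents no real difficulty.
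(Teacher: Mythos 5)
Your proposal is correct and follows essentially the same route as the paper: the lower bound rests on the same key fact that a determining set cannot omit two consecutive vertices (the paper proves the $k=2$ case of your gap count by direct enumeration on the border colors), and the upper bound is an explicit half-density set with a coloring forcing each omitted vertex (the paper's construction does casework on $n \bmod 3$ rather than your alternating-with-a-patch coloring, but this is a cosmetic difference). The only nitpick is that when $c(v_{n-3}) = c(v_0)$ the color you prescribe for $v_{n-1}$ is not unique, though any admissible choice works.
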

\begin{proof}
Since $C_n$ is not bipartite, any determining set $S$ cannot exclude any two consecutive vertices.  Indeed, suppose $v_0, v_1 \not \in S$.  Fix a proper vertex coloring $c : V \rightarrow \{0,1,2\}$.  If $c(v_{n-1}) = c(v_2)$ ($=0$ without loss of generality), then there are two ways to color $v_0$ and $v_1$: $c(v_0) = 1$ and $c(v_1) = 2$ or $c(v_0) = 2$ and $c(v_1) = 1$.  If $c(v_{n-1}) \neq c(v_2)$, without loss of generality $c(v_{n-1}) = 0$ and $c(v_2) = 1$, then either $c(v_0) = 1$ and $c(v_1)=2$ or $c(v_0)=2$ and $c(v_1)=0$.  In either case, there is more than one way to complete $c|_S$ to a coloring, a contradiction.

Now, if $|S| < n/2$, then $S$ excludes two consecutive vertices, a contradiction.  Therefore, $|S| \geq n/2$, i.e., $|S| \geq (n+1)/2$ (since $|S| \in \ZZ$).  On the other hand, we must exhibit $3$-colorings of $C_n$, $n$ odd, along with sets of $(n+1)/2$ vertices which determine the coloring.  We consider three cases:
\begin{description}
\item{$n \equiv 0 \pmod 3$ :} Let $c(v_j) = j \pmod 3$, and $S = \{v_0,v_2,v_4,\ldots,v_{n-1}\}$.  Then, since $c(v_i) \neq c(v_{i+2})$ for each $i \in \mathbb{Z}_n$, the color of each vertex in $V \setminus S = \{v_1,\ldots,v_{n-2}\}$ is uniquely determined.
\item{$n \equiv 1 \pmod 3$ :} Let $c(v_j) = j \pmod 3$ for $j < n-1$ and $c(v_{n-1}) = 1$, and $S = \{v_0$, $v_2$, $v_4$, $\ldots$, $v_{n-5}$, $v_{n-3}$, $v_{n-2}\}$.  Then, since $c(v_i) \neq c(v_{i+2})$ for each $i \in \{0,\ldots,n-4\}$ and $c(v_{n-2}) = 2 \neq 0 = c(v_0)$, the color of each vertex in $V \setminus S = \{v_1,\ldots,v_{n-6},v_{n-4},v_{n-1}\}$ is uniquely determined.
\item{$n \equiv 2 \pmod 3$ :} Let $c(v_j) = j \pmod 3$, and $S = \{v_0,v_2,v_4,\ldots,v_{n-5},v_{n-3},v_{n-1}\}$.  Then, since $c(v_i) \neq c(v_{i+2})$ for each $i \in \{0,\ldots,n-2\}$, the color of each vertex in $V \setminus S = \{v_1$, $\ldots$, $v_{n-6}$, $v_{n-4}$, $v_{n-2}\}$ is uniquely determined.
\end{description}
\end{proof}

\begin{theorem} For $n$ odd,
\[
\olcs(C_n) = n-1.
\]
\end{theorem}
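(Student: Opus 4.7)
The plan is to prove the two inequalities $\olcs(C_n) \leq n-1$ and $\olcs(C_n) \geq n-1$ separately. The upper bound is immediate from Proposition \ref{prop:cantbeall}, which tells us that every critical set in any graph has size at most $|V(G)|-1$, and so in particular no critical set of $C_n$ can have size larger than $n-1$. Hence all the work lies in producing a $3$-coloring of $C_n$ together with a critical set of size exactly $n-1$.

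For the lower bound, my idea is to use a coloring that is ``as close to bipartite as possible.'' Since $n$ is odd, the cycle is not bipartite, so any $3$-coloring must use the third color somewhere; I would force all of that non-bipartiteness to be concentrated at a single vertex. Specifically, let $c(v_0) = 2$, and for $1 \le i \le n-1$ set $c(v_i) = 0$ if $i$ is odd and $c(v_i) = 1$ if $i$ is even. This is a proper $3$-coloring because the path $v_1, v_2, \ldots, v_{n-1}$ alternates between colors $0$ and $1$, and $v_0$, being adjacent to both $v_1$ (color $0$) and $v_{n-1}$ (color $1$, since $n-1$ is even), must be given the remaining color $2$.

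Next, I would show that $S = V(C_n) \setminus \{v_0\}$ is critical. That $S$ is determining is essentially the same observation: $v_0$'s two neighbors have distinct known colors, so $c(v_0) = 2$ is forced. Minimality requires that for every $u \in S$, the set $S \setminus \{u\}$ admits a second valid extension. I would split this into two cases. In the case $u \in \{v_1, v_{n-1}\}$, both $v_0$ and $u$ have exactly one colored neighbor (which is the same color, namely $1$ for $u=v_1$ after removing $v_1$'s neighbor $v_0$, and $0$ for $u=v_{n-1}$), and they are adjacent to each other, so the two unknown vertices can be properly colored in exactly two ways by swapping the roles of the two ``free'' colors. In the case $u = v_j$ with $2 \le j \le n-2$, the vertex $v_0$ is still determined to be color $2$ (its neighbors $v_1$ and $v_{n-1}$ remain in $S \setminus \{u\}$), but now $v_j$'s two neighbors $v_{j-1}$ and $v_{j+1}$ have the same color, since they are at distance $2$ in the alternating part, so $v_j$ has two legitimate choices.

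The main thing to watch is the edge cases $j = 2$ and $j = n-2$, where one of $v_j$'s neighbors could in principle be $v_0$ or coincide with something awkward; I would verify directly that $v_{n-3}$ and $v_{n-1}$ are indeed both color $1$ using the parity of $n$, which is where oddness of $n$ is essential. Beyond this small bookkeeping, the argument is straightforward, and I expect no serious obstacle.
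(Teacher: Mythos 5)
Your proposal is correct and is essentially the paper's proof: the same ``almost bipartite'' coloring (with the third color concentrated at one vertex, just placed at $v_0$ instead of $v_{n-1}$), the same critical set of all vertices but that one, and the same upper bound via Proposition \ref{prop:cantbeall}. The only cosmetic difference is that for the two neighbors of the special vertex you exhibit the second extension directly, where the paper instead cites its earlier observation that a determining set cannot omit two consecutive vertices.
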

\begin{proof} Color $C_n$ by
\[
c_0(v_i) = \left \{ \begin{array}{ll} i \pmod 2 & \textrm{ if } 0 \leq i < n-1 \\ 2 & \textrm{ if } i=n-1. \end{array} \right .
\]
Let $S$ consist of all vertices but $v_{n-1}$.  Then $S$ determines the coloring, since $v_{n-1}$ has one $1$-colored vertex $v_{n-2}$ and one $0$-colored vertex $v_0$ from $S$ adjacent to it.  On the other hand, it is not possible to remove a vertex from $S$ and have it still determine the coloring: removing either $v_0$ or $v_{n-2}$ would leave a gap of two adjacent vertices, which we noted above is not possible in a determining set, and removing any other vertex allows for a recoloring of that vertex with $2$.  Therefore, $S$ is critical, and $\olcs(C_n) \geq n-1$.  To conclude that $\olcs(C_n) \leq n-1$, we need only invoke Proposition \ref{prop:cantbeall}.
\end{proof}

\begin{theorem} For $n$ odd,
\[
\oscs(C_n) = n-2.
\]
\end{theorem}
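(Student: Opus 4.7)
The plan is to prove matching upper and lower bounds via a structural description of determining sets for $C_n$. Building on the argument used to lower-bound $\uscs(C_n)$, any determining set $S$ cannot exclude two adjacent vertices, so $V \setminus S$ must be an independent set. Furthermore, for a singleton omission $v_i$, the color of $v_i$ is forced by $c|_S$ exactly when $c(v_{i-1}) \neq c(v_{i+1})$; call such a $v_i$ \emph{all-distinct}, and let $A_c$ denote the set of all-distinct vertices with respect to $c$. Then $S$ is determining if and only if $V \setminus S$ is an independent subset of $A_c$, and $S$ is critical if and only if $V \setminus S$ is a maximal such subset.

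For the upper bound $\oscs(C_n) \leq n - 2$, the goal is to show that every proper $3$-coloring $c$ of $C_n$ (with $n \geq 5$ odd) admits two non-adjacent all-distinct vertices $v_a, v_b$; then $V \setminus \{v_a, v_b\}$ is a determining set of size $n - 2$, and any minimal determining subset of it is a critical set of size at most $n - 2$, giving $\scs(C_n, c) \leq n - 2$. The existence of such a pair is shown by contradiction on $|A_c|$. If $|A_c| = 0$, the bouncing condition $c(v_{i-1}) = c(v_{i+1})$ at every $v_i$ forces an alternating $2$-coloring, inconsistent with $n$ odd. If $|A_c| = 1$, propagating the bouncing condition from both sides of the unique all-distinct vertex leads to a color conflict where the two sides meet on the odd cycle. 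If $|A_c| = 2$ with the two vertices adjacent, a similar propagation around the odd cycle forces equal colors on an edge. Finally, if $|A_c| \geq 3$, triangle-freeness of $C_n$ directly produces a non-adjacent pair in $A_c$.

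For the lower bound $\oscs(C_n) \geq n - 2$, I plan to reuse the coloring $c_0$ from the preceding $\olcs$ proof, defined by $c_0(v_i) = i \pmod 2$ for $0 \leq i < n - 1$ and $c_0(v_{n-1}) = 2$. A direct calculation using $n$ odd shows $A_{c_0} = \{v_0, v_{n-2}, v_{n-1}\}$; since $v_{n-1}$ is adjacent in $C_n$ to both $v_0$ and $v_{n-2}$, the induced subgraph $C_n[A_{c_0}]$ is a path of two edges, so any independent subset of $A_{c_0}$ has size at most $2$. By the structural description, every determining set $S$ satisfies $|V \setminus S| \leq 2$ and hence $|S| \geq n - 2$, giving $\scs(C_n, c_0) \geq n - 2$.

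The main obstacle is showing the existence of two non-adjacent all-distinct vertices for an arbitrary proper $3$-coloring; the propagation argument around an odd cycle requires careful bookkeeping with indices modulo $n$, particularly in the $|A_c| \leq 2$ cases, where one cannot appeal to triangle-freeness.
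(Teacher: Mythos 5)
Your proposal is correct, and for the lower bound it is essentially the paper's argument in different clothing: you use the same coloring $c_0$, and your observation that $A_{c_0}=\{v_{n-2},v_{n-1},v_0\}$ induces a two-edge path is exactly the paper's conclusion that $v_1,\ldots,v_{n-3}$ must all lie in $S$ and that not all three of the remaining vertices can be omitted. (Your ``all-distinct'' vertices are precisely what the paper later calls \emph{colorful} vertices, and your structural lemma --- $S$ is determining iff $V\setminus S$ is an independent subset of $A_c$ --- is a clean packaging of the two local facts the paper uses piecemeal.) Where you genuinely diverge is the upper bound: the paper's proof stops at $\oscs(C_n)\geq n-2$ and never argues that $\scs(C_n,c)\leq n-2$ for \emph{every} proper $3$-coloring $c$; Proposition \ref{prop:cantbeall} only yields $n-1$, and the colorful-vertex lemma as stated only guarantees one colorful vertex. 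Your claim that every proper $3$-coloring of $C_n$ ($n\geq 5$ odd) admits two \emph{non-adjacent} colorful vertices is the missing ingredient, and your case analysis does go through: the conditions $c(v_{i-1})=c(v_{i+1})$ live on the distance-two graph of $C_n$, which for $n$ odd is again an $n$-cycle, so if at most two such conditions fail (and the corresponding two edges are consecutive on that cycle when the two colorful vertices are adjacent), the remaining equalities force adjacent vertices of $C_n$ to share a color. So your write-up is actually more complete than the paper's. One small caveat to record explicitly: the statement (and both arguments) require $n\geq 5$, since $C_3=K_3$ is uniquely colorable and hence critically $2$-uniform, giving $\oscs(C_3)=2\neq n-2$.
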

\begin{proof} Consider any critical set $S$ for the coloring $c_0$ considered in the preceding proof.  If $v_i \in S$, then $v_{i+1} \in S$ or $v_{i+2} \in S$, because $V(C_n) \setminus S$ cannot have two consecutive vertices.  If $v_{i+1} \not \in S$ and $i \leq n-4$, then $c(v_{i+2}) = c(v_i)$, so $v_{i+1} \in S$ or else it could be recolored with the single element of $\ZZ_3 \setminus \{c(v_i),c(v_{i+1})\}$.  Therefore, $v_i \in S$ for all $1 \leq i \leq n-3$, and there are only three vertices which could be omitted from $S$: $v_{n-2}$, $v_{n-1}$, and $v_0$.  All three cannot be omitted, or else $V(C_n) \setminus S$ would contain two consecutive vertices.  Therefore, $|S| \geq n-2$ and $\oscs(C_n) \geq n-2$.
\end{proof}

For the next few proofs, we find the following definition useful.

\begin{definition} A vertex $v$ of a graph $G$ associated with a coloring $c : V(G) \rightarrow [k]$ is ``colorful'' if $|c(\{v\} \cup N(v))| = k$.
\end{definition}

\begin{lemma} Every proper coloring of $C_n$, for $n$ odd, admits a colorful vertex.
\end{lemma}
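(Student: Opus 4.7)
The plan is to proceed by contradiction, using the fact that $\chi(C_n) = 3$ when $n$ is odd together with the fact that $C_n$ is not bipartite.

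Suppose $c : V(C_n) \to [3]$ is a proper coloring in which no vertex is colorful. Fix any index $i$ and consider the window $v_{i-1}, v_i, v_{i+1}$. Since $v_i$ is not colorful, the set $\{c(v_{i-1}), c(v_i), c(v_{i+1})\}$ has size at most $2$; since $c$ is proper, both $c(v_{i-1})$ and $c(v_{i+1})$ differ from $c(v_i)$, so the only way to have at most two colors in the window is $c(v_{i-1}) = c(v_{i+1})$.

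As this holds for every $i \in \ZZ_n$, the color $c(v_i)$ depends only on $i$ modulo $2$: we have $c(v_0) = c(v_2) = c(v_4) = \cdots$. Because $n$ is odd, $\gcd(2,n) = 1$, so the set $\{0, 2, 4, \ldots, 2(n-1)\}$ covers all residues modulo $n$. Hence $c$ is constant on $V(C_n)$, contradicting the fact that $c$ is a proper coloring of a graph with an edge.

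This argument is essentially the same as saying that the existence of no colorful vertex would yield a proper $2$-coloring of $C_n$, which is impossible for odd $n$; there is no real obstacle, only the small bookkeeping of confirming that $c(v_{i-1}) = c(v_{i+1})$ propagates around the entire odd cycle.
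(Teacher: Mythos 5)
Your proof is correct and takes essentially the same approach as the paper's: both derive from the absence of a colorful vertex that $c(v_{i-1}) = c(v_{i+1})$ for every $i$, and then extract a contradiction from the parity of the odd cycle. The only cosmetic difference is that the paper phrases the final contradiction as the colors alternating (a forbidden $2$-coloring), while you phrase it as $c$ being constant via $\gcd(2,n)=1$; these are the same idea.
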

\begin{proof}
Let $c : V(C_n) \rightarrow \ZZ_3$ be any proper vertex coloring of $C_n$.  Towards a contradiction, suppose that every $v_i \in C_n$, has the property that $|\{c(v_{i-1}),c(v_{i}),c(v_{i+1})\}| = 2$.  Without loss of generality, $c(v_1) = 0$ and $c(v_2) = 1$, so $c(v_3) = 0$, or else $v_2$ would yield a contradiction.  But then $c(v_4) = 1$, or else $v_3$ would yield a contradiction.  Proceeding around the cycle, we see that the colors alternate, which is not possible for an odd cycle.  Therefore, every coloring has a colorful vertex $v$.
\end{proof}

Note that no critical set can contain a colorful vertex and all of its neighbors, as its neighbors already determine that vertex's color.

\begin{theorem} For $n$ odd,
\[
\ulcs(C_n) = \left \{ \begin{array}{ll} \frac{n+3}{2} & \textrm{ if } n \equiv 1 \pmod{4} \\ \frac{n+1}{2} & \textrm{ if } n \equiv 3 \pmod{4}. \end{array} \right .
\]
\end{theorem}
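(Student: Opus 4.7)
The plan is to reformulate the problem as finding, for each proper $3$-coloring $c$ of $C_n$, a minimum-size maximal independent set of colorful vertices. A set $S$ is determining for $c$ precisely when every vertex of $V \setminus S$ has both neighbors in $S$ and of distinct colors---i.e., when $V \setminus S$ is an independent set consisting of colorful vertices---and $S$ is critical precisely when $V \setminus S$ is \emph{maximal} among such sets. Writing $\mu(c)$ for the minimum size of a maximal independent set of colorful vertices, I have $\lcs(C_n, c) = n - \mu(c)$, and thus $\ulcs(C_n) = n - \max_c \mu(c)$.

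Next I would compute $\mu(c)$ via an arc decomposition. The non-colorful vertices cut the colorful ones into $N$ maximal arcs of lengths $\ell_1, \ldots, \ell_N$, where $N$ is the (necessarily even) number of non-colorful vertices. Inside each arc the problem reduces to finding a minimum independent dominating set of a path on $\ell_i$ vertices, of size $\lceil \ell_i/3 \rceil$, and independence across arcs is automatic since they are separated by at least one non-colorful vertex. Hence
\[
\mu(c) = \sum_{i=1}^N \lceil \ell_i / 3 \rceil,
\]
with the corner case $N = 0$ (possible only when $3 \mid n$) contributing $\mu(c) = \lceil n/3 \rceil$. The inequality $\lceil \ell/3 \rceil \leq (\ell+1)/2$, valid for every $\ell \geq 0$ with equality iff $\ell = 1$, then gives $\mu(c) \leq n/2$; since $\mu(c) \in \ZZ$ and $n$ is odd, $\mu(c) \leq (n-1)/2$, already proving $\ulcs(C_n) \geq (n+1)/2$.

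The heart of the argument is deciding when $\mu(c) = (n-1)/2$ is actually attained. Equality forces the total ``slack'' $\sum_i \bigl((\ell_i + 1)/2 - \lceil \ell_i/3 \rceil\bigr)$ to equal $1/2$. Since individual slacks vanish only at $\ell = 1$ and equal $1/2$ only at $\ell \in \{0, 2, 4\}$, exactly one arc must have length in $\{0, 2, 4\}$ and the remaining $N - 1$ arcs must have length $1$; matching $\sum \ell_i = n - N$ yields, respectively, $N = (n+1)/2$, $(n-1)/2$, or $(n-3)/2$. The requirement that $N$ be even eliminates two of the three subcases according to $n \bmod 4$, and the cycle-closure constraint $\sum \delta_i \equiv 0 \pmod 3$ on the increments $\delta_i = c_{i+1} - c_i \in \{1, 2\}$ (whose maximal runs alternate in value and have lengths $\ell_i + 1$) must then be verified. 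I expect the main obstacle to be this short but delicate mod-$3$ calculation: it shows that the length-$0$ and length-$2$ subcases give $\sum \delta_i \equiv \pm 1 \pmod 3$ regardless of how the exceptional block is placed, while the length-$4$ subcase is always realizable (the corresponding length-$5$ $\delta$-block yields $\sum \delta_i \equiv 0 \pmod 3$ at either parity of placement). Hence $\mu(c) = (n-1)/2$ is achievable precisely when $n \equiv 3 \pmod 4$.

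For the remaining case $n \equiv 1 \pmod 4$, the above already shows $\mu(c) \leq (n-3)/2$, and I would finish by producing matching colorings: for $n = 5$, a coloring with $N = 2$ and arcs $(0, 3)$; and for $n \geq 9$, a coloring with $N = (n-5)/2$ (even since $n \equiv 1 \pmod 4$) and arcs $(1, 1, \ldots, 1, 2, 5)$, where the length-$3$ and length-$6$ $\delta$-blocks (coming from the length-$2$ and length-$5$ arcs) are placed at opposite parities to enforce $\sum \delta_i \equiv 0 \pmod 3$. A direct computation confirms $\mu(c) = (n-3)/2$ in each construction, completing the proof that $\ulcs(C_n) = (n+3)/2$ in this residue class.
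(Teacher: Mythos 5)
Your proposal is correct, but it takes a genuinely different route from the paper's. The paper works with a single explicit coloring for the upper bound, arguing directly about which vertices a critical set for that coloring must contain and which colorful vertices it cannot, and gets the lower bound from $\uscs(C_n) \leq \ulcs(C_n)$ together with (for $n \equiv 1 \pmod 4$) a contradiction argument: a coloring all of whose critical sets have size $(n+1)/2$ would force a rigid near-alternating color pattern incompatible with the cycle's odd length. You instead compute $\lcs(C_n,c)$ exactly for every coloring: the complement of a critical set is a maximal independent set of colorful vertices, so $\lcs(C_n,c) = n - \mu(c)$ with $\mu(c) = \sum_i \lceil \ell_i/3 \rceil$ over the arcs of colorful vertices, and you then determine $\max_c \mu(c)$ by a slack/parity/mod-$3$ analysis of the run structure of the increments $\delta_i = c(v_{i+1}) - c(v_i)$. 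I verified the load-bearing claims: $v_i$ is colorful iff $\delta_{i-1} = \delta_i$, so the number of non-colorful vertices is the (even) number of switches in the cyclic $\delta$-sequence; the slack $(\ell+1)/2 - \lceil \ell/3 \rceil$ vanishes only at $\ell = 1$ and equals $1/2$ only at $\ell \in \{0,2,4\}$; and with one exceptional arc of length $\ell$ among length-$1$ arcs the closure sum is $\equiv \pm(\ell - 1) \pmod 3$, which rules out $\ell \in \{0,2\}$ and admits $\ell = 4$, exactly as you assert; your constructions for $n \equiv 1 \pmod 4$ also check out. In a full write-up you should note that $n = 3$ is covered by your $N = 0$ corner case (still giving $\mu = (n-1)/2$, so the $n \equiv 3 \pmod 4$ claim survives there), and you should state explicitly that every alternating run sequence with $\sum \delta_i \equiv 0 \pmod 3$ is realized by an actual proper coloring, since the existence halves of your argument rest on that. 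Your approach buys an exact value of $\lcs(C_n,c)$ for all $c$ and a structural explanation of the $n \bmod 4$ dichotomy; the paper's is shorter where a single well-chosen coloring suffices.
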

\begin{proof} Color $V(C_n)$ as follows:
\[
c(v_i) = \left \{ \begin{array}{ll} 0 & \textrm{if } 2 | i \textrm{ and } i < n-1 \\ 1 & \textrm{if } i \equiv 1 \pmod 4 \\ 2 & \textrm{if } i \equiv 3 \pmod 4 \\ 3-c(v_{n-2}) & \textrm{if } i=n-1 \end{array} \right .
\]
It is easy to check that this is a proper coloring.  Let $S$ be a critical set for $c$.  Then $S$ must contain $v_1$, $v_3$, \ldots, $v_{n-4}$, as, even if the vertices on both sides of these $v_i$ are included, the color of $v_i$ is not determined.  However, then it is unnecessary to include $v_2$, $v_4$, \ldots, $v_{n-5}$, since these vertices are colorful.  Now, at most three of the colorful vertices $v_{n-3}$, $v_{n-2}$, $v_{n-1}$, and $v_0$ can be in $S$, or else $S$ would contain three consecutive vertices the middle one of which is colorful.  Therefore,
\[
|S| \leq \frac{n-3}{2} + 3 = \frac{n+3}{2}.
\]
Therefore, $\ulcs(C_n) \leq (n+3)/2$.  If $n \equiv 3 \pmod{4}$, we claim that in fact, $S$ can include at most two of the vertices $v_{n-3}$, $v_{n-2}$, $v_{n-1}$, and $v_0$.  Note that $c(v_{n-3}) = 0$, $c(v_{n-2}) = 1$, $c(v_{n-1}) = 2$, and $c(v_0) = 0$, and {\em all four of these vertices are colorful}.  Suppose $S$ contained three of these vertices.  Since $v_{n-4} \in S$ and $v_1 \in S$, either $S$ contains $v_{n-4}$, $v_{n-3}$, and $v_{n-2}$, or it contains $v_{n-1}$, $v_0$, and $v_1$; in either case, the middle vertex is determined by the colors of its neighbors, a contradiction.  We may conclude that $|S| \leq (n-3)/2 + 2 = (n+1)/2$.  Note that $(n+1)/2 = \uscs(C_n) \leq \ulcs(C_n)$.  Therefore, if $n \equiv 3 \pmod{4}$, $\ulcs(C_n) = (n+1)/2$.

If $n \equiv 1 \pmod{4}$, we claim that every coloring in fact admits a critical set of size at least $(n+3)/2$.  Suppose not.  Then, by the above bounds, every critical set $S$ for some coloring $c$ is size $(n+1)/2$.  Since $S$ cannot omit two consecutive vertices, $S$ must contain every other vertex all the way around the cycle, except for exactly one location where it contains two consecutive vertices.  Without loss of generality, we may assume these are $v_0$ and $v_1$, so $S = \{v_0,v_1,v_3,v_5,\ldots\}$.  Note that $v_k$ must be colorful for $k > 0$ even, or else it would have to be included in $S$.  Furthermore, every $v_k$ with $k > 1$ odd must be noncolorful, or else we could replace $v_k$ with $v_{k-1}$ and $v_{k+1}$ in $S$, obtaining a critical set of size $> (n+1)/2$.  Therefore, if $v_1$ is noncolorful, then, without loss of generality, $c(v_0) = 0$, $c(v_1) = 1$, $c(v_2)=0$, $c(v_3)=2$, $c(v_4)=0$, $c(v_5)=1$, $c(v_6)=0$, $c(v_7)=2$, and so on.  (Specifically, $c(v_k) = 0$ if $k$ is even, $c(v_k) = 1$ if $k \equiv 1 \pmod{4}$ and $c(v_k)=2$ if $k \equiv 3 \pmod{4}$.)  However, then $c(v_{n-1}) = 0$, contradicting the fact that $c(v_0)=0$ and $c$ is a proper coloring.  A similar argument applies to $v_0$ being noncolorful by reversing the indices of $C_n$ via the map $j \mapsto 1-j$.  If $v_0$ and $v_1$ are colorful, then, without loss of generality, $c(v_0) = 0$, $c(v_1)=2$, $c(v_2)=1$, $c(v_3)=0$, $c(v_4)=2$, $c(v_5)=0$, $c(v_6)=1$, and so on.  (Specifically, for $2 \leq k \leq n-1$, $c(v_k) = 0$ if $k$ is odd, $c(v_k) = 1$ if $k \equiv 2 \pmod{4}$ and $c(v_k)=2$ if $k \equiv 0 \pmod{4}$.)  However, since $n-1 \equiv 0 \pmod{4}$, $c(v_{n-1}) = 2$, contradicting the fact that $v_0$ is colorful.  Therefore, if $n \equiv 1 \pmod{4}$, then $\ulcs(C_n) = (n+3)/2$.
\end{proof}

For the next result, let $\sud_n$ denote the Sudoku graph, i.e., $(K_{n^2} \Box K_{n^2}) \cup (nK_n \boxtimes nK_n)$. Our proof draws heavily on that of a similar statement for Latin squares occurring in \cite{GHM05}.

\begin{theorem}
\[
\oscs(\sud_n) = n^4 - \Omega(n^{10/3}).
\]
\end{theorem}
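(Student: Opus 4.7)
The plan is to adapt the Latin square argument from \cite{GHM05}. Fix an arbitrary proper coloring (Sudoku board) $c$ of $\sud_n$; I want to exhibit a determining set of size at most $n^4 - \Omega(n^{10/3})$, or equivalently a subset $T \subseteq V(\sud_n)$ of size $\Omega(n^{10/3})$ such that $V(\sud_n) \setminus T$ is still determining.  The key reformulation is that $V \setminus T$ is determining if and only if $T$ contains no nontrivial \emph{trade}, i.e., no set of cells on which the values of $c$ can be simultaneously reassigned to yield another valid Sudoku board with the same entries elsewhere.  Since every trade contains a minimal trade, it suffices to force $T$ to avoid every minimal trade.

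The smallest minimal trades in a Latin square are intercalates: $2 \times 2$ configurations $\{(r_1,c_1),(r_1,c_2),(r_2,c_1),(r_2,c_2)\}$ with values $\{a,b,b,a\}$ on which one can swap $a \leftrightarrow b$.  In the Sudoku setting these are the intercalates whose swap additionally respects the block constraint, and the number of such configurations in a Sudoku board of order $n$ is at most the number of intercalates in the underlying Latin square of order $n^2$, which is $O((n^2)^3) = O(n^6)$ by classical bounds.

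This suggests a probabilistic/alteration construction.  Let $T_0 \subseteq V(\sud_n)$ include each cell independently with probability $p$, so that $\EE[|T_0|] = p n^4$ and the expected number of intercalates contained in $T_0$ is at most $O(n^6 p^4)$.  Choosing $p = \Theta(n^{-2/3})$ balances these, so that by deleting one cell from each intercalate inside $T_0$ one obtains a set $T$ with $|T| = \Omega(n^{10/3})$ and no intercalate in $T$.  Provided $T$ additionally contains no larger minimal trade, $V \setminus T$ is a determining set of size at most $n^4 - \Omega(n^{10/3})$, and since $c$ was arbitrary this gives the claimed bound on $\oscs(\sud_n)$.

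The principal obstacle is handling minimal Latin trades of size $2k \geq 6$: a random $T$ at density $p = n^{-2/3}$ already has expected size $n^{10/3}$, so I need the expected number of minimal trades of each larger size contained in $T$ to remain a small fraction of this.  Concretely, one needs an upper bound on the number of minimal Latin trades of size $2k$ in any Latin square of order $m = n^2$, strong enough that when multiplied by $p^{2k}$ the result is $o(p n^4)$.  Proving this uniformly in $k$---presumably via the cycle decomposition of the associated Latin bitrade, together with the extra rigidity imposed by the block structure in Sudoku---is the crux of the argument and the step where I expect the heaviest combinatorial work, mirroring the corresponding technical estimate in \cite{GHM05}.
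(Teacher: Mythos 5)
Your reduction to trades is sound in principle---$V\setminus T$ is determining iff $T$ contains no nontrivial Sudoku trade, and every trade contains a minimal one---and your intercalate calculation balances correctly at $p=cn^{-2/3}$ for small $c$. But the step you defer, ``provided $T$ additionally contains no larger minimal trade,'' is not a technical loose end: it is the entire difficulty of your approach, and you give no reason to believe the required estimate holds. A first-moment union bound needs the number of minimal trades of size $2k$ in a Sudoku board of order $n$ to be $o\bigl(p^{1-2k}n^4\bigr)=o\bigl(n^{(4k+8)/3}\bigr)$ \emph{uniformly} in $k$, and no such bound on the number of minimal Latin (or Sudoku) trades of a given size is available; minimal trades can span many rows and columns, their structure is not controlled by a simple cycle decomposition once more than two rows are involved, and the count of them is a hard open-ended enumeration problem. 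As written, the argument establishes nothing beyond the exclusion of intercalates, which does not suffice: a set avoiding all intercalates can still contain a size-$6$ trade, so $V\setminus T$ need not be determining.

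The paper (following \cite{GHM05}) avoids trade enumeration entirely. It assigns each cell an independent uniform birth time in $[0,1]$ and deletes a cell at its birth time only if, among the cells \emph{not yet deleted}, every color other than its own still appears in its neighborhood. This local condition makes the complement of the deleted set a determining set by construction (colors of deleted cells are recovered in reverse birth order), so no global statement about trades is ever needed. The expected number of surviving cells is then bounded by a purely local computation: a cell born at time $t$ fails to be deletable with probability at most $1-(1-t^2)^{2n-1}(1-t^3)^{(n-1)^2}$, since each missing color is witnessed by a set $N_v^\gamma$ of $2$ or $3$ cells, and integrating over $t$ (restricting to $t\le n^{-2/3}$) yields $n^4-\Omega(n^{10/3})$. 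If you want to salvage your route, you would need to either prove the uniform-in-$k$ trade count or switch to a deletion rule, like the paper's, whose correctness is certified locally rather than by the global absence of trades.
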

\begin{proof}
For each of the $n^4$ vertices $(a,b) \in [n^2] \times [n^2]$ of $\sud_n$, let $t_{(a,b)}$ denote a uniform random real number in the interval $[0,1]$. Fix a proper vertex coloring $\phi : V(\sud_n) \rightarrow [n^2]$.  We construct a determining set $S$ by the following random process:
\begin{enumerate}
\item Let $j=0$ and $S_0 = [n^2] \times [n^2]$, and start a timer at $t = 0$.
\item \label{nextvertex} Wait until the next vertex $(a,b) \in V(\sud_n)$ is ``born'' at time $t_{(a,b)}$ or else $t=1$, in which case go to (\ref{endstep}).
\item If, for all $\gamma \in [n^2]$, there exists a $v \in V(\sud_n)$ so that $\phi(v) = \gamma$ and $v \in S_j$ and $\{v,(a,b)\} \in E(\sud_n)$, then $S_{j+1} = S_j \setminus \{(a,b)\}$.
\item $j \leftarrow j+1$
\item If $t < 1$, go to (\ref{nextvertex}).
\item \label{endstep} Return $S_{n^2}$.
\end{enumerate}
Note that the algorithm executes properly with probability $1$, since almost surely $t_{v} \neq t_{w}$ for any $v, w \in V(\sud_n)$ with $v \neq w$.  Also, it is easy to see that $S_{n^2}$ is a determining set for $\phi$.

Now, we compute $\EE[|S_{n^2}|]$.  When a vertex $v = (a,b)$ is born at time $t$ to be considered for omission from $S_j$ (to obtain $S_{j+1}$), the probability that any particular vertex $w$ in its neighborhood is not contained in $S_j$ is bounded above by the probability that it has been born, i.e., $t$.  Fix $\gamma \in [n^2]$, $\gamma \neq \phi(v)$.  There is exactly one vertex $c_v^\gamma = (\alpha,\beta) \in V(\sud_n)$ with $a=\alpha$ and $b \neq \beta$ and $\phi(c_v^\gamma) = \gamma$; there is exactly one vertex $r_v^\gamma = (\alpha,\beta) \in V(\sud_n)$ with $a \neq \alpha$ and $b = \beta$ and $\phi(r_v^\gamma) = \gamma$; there is exactly one vertex $b_v^\gamma = (\alpha,\beta) \in V(\sud_n)$ with $\lceil \alpha/n \rceil = \lceil a/n \rceil$ and $\lceil \beta/n \rceil = \lceil b/n \rceil$ and $\phi(b_v^\gamma) = \gamma$.  Let $N_v^\gamma = \{c_v^\gamma,r_v^\gamma,b_v^\gamma\}$; $|N_v^\gamma| = 2$ for exactly $2n-1$ colors $\gamma$ and $|N_v^\gamma| = 3$ for exactly $n^2 - 2n + 1 = (n-1)^2$ colors $\gamma$.  For any particular $\gamma$, the probability that every element of $N_v^\gamma$ has been removed from $S_j$ is at most $1-t^{|N_v^\gamma|}$.  Therefore, the probability that, for every $\gamma \in [n^2] \setminus \{\phi(v)\}$, $S_j \cap N_v^\gamma \neq \emptyset$ is at least
\[
(1 - t^2)^{2n-1} (1-t^3)^{(n-1)^2},
\]
and
\begin{align*}
\EE[|S_{n^2}|] &\leq n^4 \int_0^1 1 - (1 - t^2)^{2n-1} (1-t^3)^{(n-1)^2} \, dt \\
&= n^4 - n^4 \int_0^1 (1 - t^2)^{2n-1} (1-t^3)^{(n-1)^2} \, dt \\
&\leq n^4 - n^4 \int_0^{n^{-2/3}} (1 - t^2)^{2n-1} (1-t^3)^{(n-1)^2} \, dt \\
&\leq n^4 - n^{4-2/3} (1 - n^{-4/3})^{2n-1} (1-n^{-2})^{(n-1)^2} \\
&= n^4 - n^{10/3} \exp \left (- O(n^{-4/3} n-n^{-2}n^2) \right )\\
&= n^4 - n^{10/3} \exp \left (1 - O(n^{-1/3}) \right )\\
&= n^4 - O(n^{10/3}),
\end{align*}
where the constant implicit in the `$O$' is universal.  Thus, for every optimal coloring $\phi$ of $\sud_n$, there exists a determining set of size at most $n^4 - O(n^{10/3})$, and therefore also a critical set of size at most $n^4 - O(n^{10/3})$.  In particular, then,
\[
\oscs(\sud_n) = n^4 - \Omega(n^{10/3}).
\]
\end{proof}

We conclude this section with a table (Figure \ref{fig:fig1}) of the four parameters and their values on all nonbipartite graphs on $5$ vertices.  Every graph on at most $4$ vertices is critically uniform, with parameters: $0$ for $K_1$, $2K_1$, $3K_1$, and $4K_1$; $1$ for $K_2$, $P_3$, $K_{1,3}$, $P_4$, and $C_4$; $2$ for $K_1 \cup K_2$, $K_3$, $2K_2$, $K_1 \cup P_3$, $\overline{K_1 \cup P_3}$, and $\overline{2K_1 \cup K_2}$; and $3$ for $2K_1 \cup K_2$, $K_1 \cup K_3$, and $K_4$.  (The symbols $+e$, $\overline{\cdot}$, and $\cup$ denote the addition of a pendant edge, complementation, and disjoint union, respectively.)

\begin{figure}[ht]
\begin{center}
\begin{tabular}{c}
\includegraphics[width=5in]{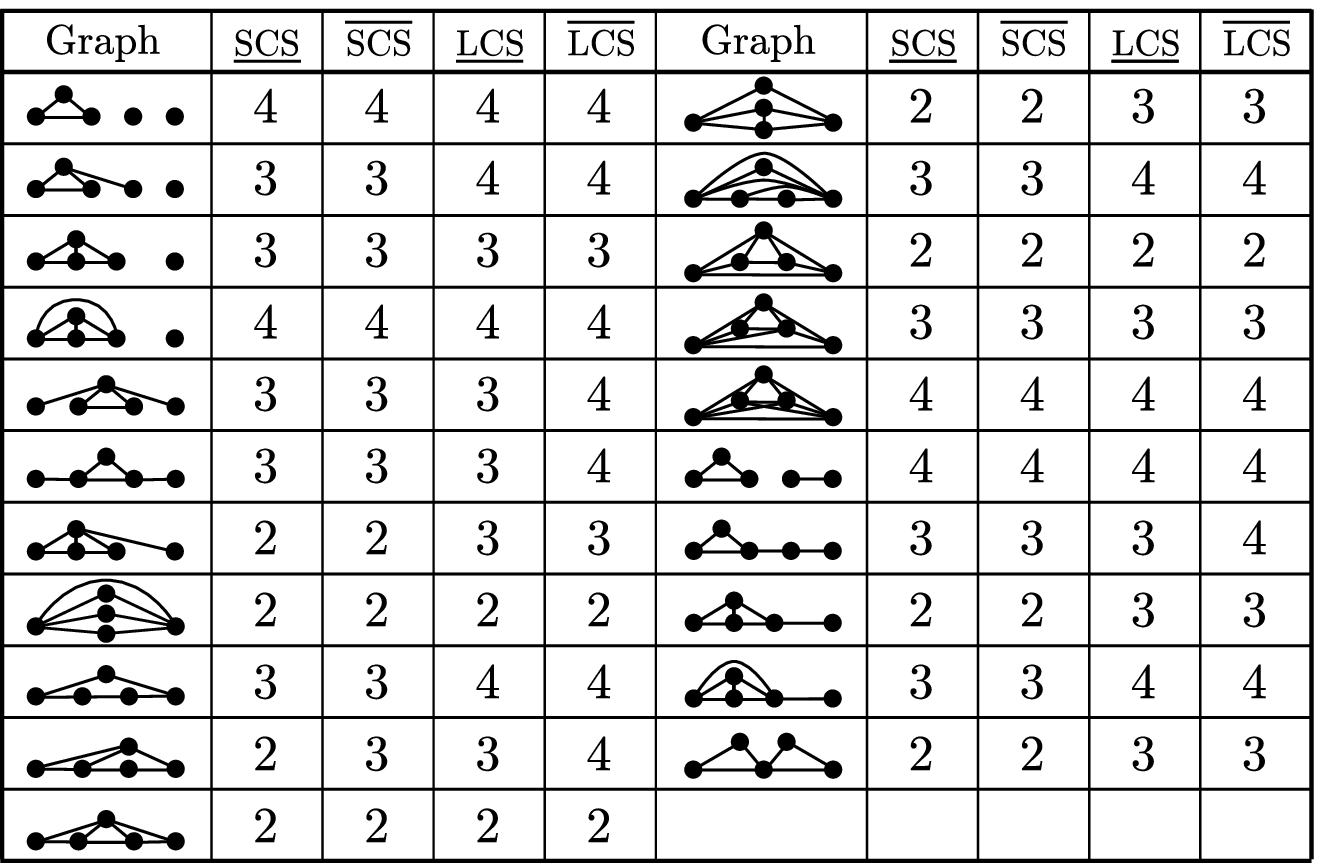}
\end{tabular}
\caption{\label{fig:fig1} A table of the four parameters for all five vertex, nonbipartite graphs.}
\end{center}
\end{figure}

\section{Complexity}

Another important question in studying the graph parameters above is the determining of the difficulty of computing them.  In fact, some aspects of this question have been considered in the recent literature.  Indeed, under the rubric of ``defining sets'', the following two results are almost immediate from the work of Hatami-Tusserkani (\cite{HT12}):

\begin{theorem} It is \textsc{np}-hard to decide, for a given integer $k$ and graph $G$, whether $\uscs(G) \geq k$.
\end{theorem}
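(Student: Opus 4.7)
The plan is to transfer the NP-hardness result of Hatami-Tusserkani on minimum defining sets to our parameter. First I would note the terminological match: a defining set of a proper $k$-coloring $c$ of $G$, as used in \cite{HT12}, is a subset $S \subseteq V(G)$ such that $c$ is the unique proper $k$-coloring of $G$ extending $c|_S$; in the case $k = \chi(G)$, this is precisely our notion of determining set. Moreover, any minimum-cardinality determining set is automatically critical (no proper subset can be determining), so
\[
\uscs(G) = \min_{c} \, \min\{|S| : S \text{ is a defining set of } c\},
\]
where the outer minimum ranges over all proper $\chi(G)$-colorings $c$. Call this latter quantity $\mu_{\chi(G)}(G)$ for brevity.

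Next I would invoke Hatami-Tusserkani, which gives NP-hardness for deciding, on inputs $(H, k, \ell)$, whether the analogue $\mu_k(H) \geq \ell$ holds. If their reduction already produces graphs $H$ satisfying $\chi(H) = k$, nothing else is needed. Otherwise, given an instance $(H, k, \ell)$ with $k \geq \chi(H)$, I would construct $G := H \cup K_k$, the disjoint union. Then $\chi(G) = k$, and the construction is polynomial in the input size. I claim
\[
\uscs(G) \;=\; (k-1) + \mu_k(H),
\]
so that with $k^{\star} := (k-1) + \ell$ we obtain $\mu_k(H) \geq \ell \iff \uscs(G) \geq k^{\star}$, completing the reduction.

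The main obstacle is verifying this additivity. The argument goes as follows: any critical set $S$ of $(G, c)$ splits along the two components as $S = S_H \sqcup S_K$. Applying Proposition \ref{prop:uniquelycolorableimpliessamesize} to $K_k$ (which is uniquely colorable), $S_K$ must have size exactly $k-1$ and pins down the full palette. Given the palette, $S_H$ must be a defining set for $c|_H$ regarded as a proper $k$-coloring of $H$, for otherwise any alternate coloring $c'$ of $H$ agreeing with $c$ on $S_H$ would, combined with $c|_{K_k}$, yield a second proper $k$-coloring of $G$ extending $c|_S$. Minimality of $S$ forces minimality of each $S_K, S_H$. Conversely, concatenating a minimum critical set on $K_k$ with a minimum defining set on $H$ for some optimal coloring gives a critical set of size $(k-1) + \mu_k(H)$ on $G$. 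The only subtle point is that $c|_H$ need not use all $k$ colors, but this does not affect the argument since both the defining-set condition and the minimality count are taken with respect to the fixed $k$-color palette inherited from $K_k$.
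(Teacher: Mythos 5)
The paper gives no proof of this theorem beyond the remark that it is ``almost immediate'' from \cite{HT12}, so your proposal actually supplies more detail than the source while taking the same route: the identification of $\uscs(G)$ with the minimum defining number over optimal colorings is correct, and the additivity of minimum determining sets over the components of $H \cup K_k$ (where the clique serves only to force $\chi(G)=k$, since the components are colored independently) is sound --- indeed the same disjoint-union-with-a-clique padding appears in the paper's own reductions in Theorems \ref{thm:ulcshard} and \ref{thm:olcshard}. The only caveat, which the paper shares, is that the argument is conditional on the precise form in which Hatami--Tusserkani state their hardness result, and your hedge covers both plausible forms.
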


\begin{theorem} It is \textsc{np}-hard to decide, for a given integer $k$ and graph $G$, whether $\oscs(G) \geq k$.
\end{theorem}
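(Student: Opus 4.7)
The plan is to replicate the strategy of the preceding theorem: translate the defining-set hardness results of Hatami and Tusserkani (\cite{HT12}) into our language. Their notion of a ``defining set'' for a proper vertex coloring coincides with our determining set, and the parameters they analyze—the minimum and maximum defining number taken over proper optimal colorings of a graph—correspond to $\uscs$ and $\oscs$ respectively. (One uses the trivial observation that every minimum determining set is automatically minimal, hence critical, so $\scs(G,c)$ equals the size of the smallest defining set of $c$.)

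With this dictionary in hand, I would trace through the reduction in \cite{HT12} that establishes NP-hardness of the defining number. The reduction takes an instance of an NP-complete problem (such as a coloring-extension problem or a SAT variant) and produces a graph $G$ together with a threshold $k$. I would check that the ``existence'' direction built into the $\oscs$ quantifier—namely, the existence of a proper coloring $c$ of $G$ under which every determining set has size at least $k$—corresponds precisely to the YES-instances of the source problem. Assuming \cite{HT12} treats the maximum flavor directly (as is standard when the paper investigates both extremal parameters simultaneously), the reduction carries over verbatim.

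The main obstacle arises if \cite{HT12} handles only the minimum flavor corresponding to $\uscs$. In that case, I would augment the produced graph $G$ by attaching, along a carefully chosen interface, a rigid gadget $H$ whose optimal colorings are essentially unique, to form a graph $G'$. The goal is to engineer $G'$ so that every proper optimal coloring restricts to an equivalent coloring of the original $\uscs$-hardness gadget (up to permutation of colors), forcing $\uscs(G') = \oscs(G')$ and allowing the previously established $\uscs$ hardness to transfer. The delicate step is verifying that this interface neither distorts the critical-set sizes inherited from $G$ nor introduces new critical sets that straddle the boundary in unintended ways; this reduces to a case analysis on how critical sets of $G'$ split across the cut between $G$ and $H$, using Proposition \ref{prop:uniquelycolorableimpliessamesize} to control the contribution of $H$.
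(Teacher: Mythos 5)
Your main line of argument is exactly what the paper does: the paper offers no proof of this theorem at all, stating only that it is ``almost immediate'' from Hatami--Tusserkani via precisely the dictionary you describe (defining set $=$ determining set, maximum over optimal colorings of the minimum defining number $=$ $\oscs$), so your first two paragraphs match the paper's approach. Be aware, though, that your contingency gadget in the final paragraph is self-defeating and should be dropped: if $G'$ is engineered so that all optimal colorings are equivalent up to color permutation, then $G'$ is uniquely colorable, and Proposition \ref{prop:uniquelycolorableimpliessamesize} forces every critical set to have size exactly $\chi(G')-1$, which trivializes all four parameters and destroys the reduction.
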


To show that the ``other'' two parameters are indeed hard to compute, we follow Hatami-Tusserkani's proofs closely and make a few necessary modifications.  Because the constructions are somewhat difficult to relate via text, we include Figure \ref{fig:fig2} for illustration.

\begin{figure}[ht]
\begin{center}
\begin{tabular}{c}
\includegraphics[width=4in]{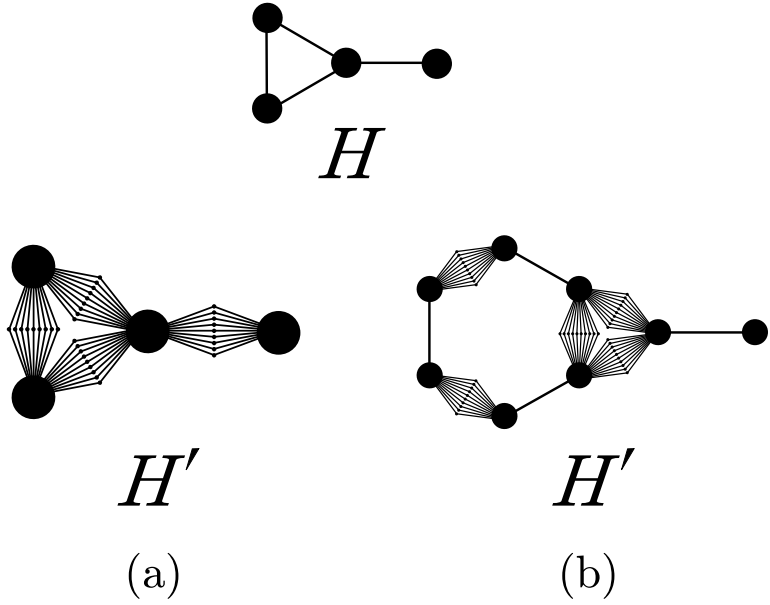}
\end{tabular}
\caption{The constructions appearing in the proofs of Theorems \ref{thm:ulcshard} and \ref{thm:olcshard}.\label{fig:fig2}}
\end{center}
\end{figure}

\begin{theorem} \label{thm:ulcshard} It is \textsc{np}-hard to decide, for a given integer $k$ and graph $G$, whether $\ulcs(G) \geq k$.
\end{theorem}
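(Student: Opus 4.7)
The plan is to adapt the Hatami-Tusserkani \textsc{np}-hardness reduction for $\uscs$ (which supplied the preceding theorem) to the parameter $\ulcs$, which asks instead for the \emph{maximum} critical set size to be large under every proper coloring. I would begin by recalling their reduction from a suitable \textsc{np}-hard coloring-type problem $\Pi$ in enough detail to isolate its ``free'' interface: that reduction takes an instance $\mathcal{I}$ to a graph $G_{\mathcal{I}}$ whose proper colorings correspond in a controlled way to candidate solutions and whose smallest critical set meets a prescribed target exactly when $\mathcal{I}$ is a yes-instance.

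Next, I would splice $G_{\mathcal{I}}$ to the auxiliary gadget $H$ depicted in Figure \ref{fig:fig2}, producing a graph $G'$. The gadget $H$ should be uniquely colorable once the colors of a designated set of interface vertices (shared with $G_{\mathcal{I}}$) are fixed. By Proposition \ref{prop:uniquelycolorableimpliessamesize}, this pins the contribution of $H$ to every critical set at exactly $\chi(H)-1$ vertices, independent of how the rest of $G'$ is colored. The interface edges would be arranged so that every proper coloring of $G'$ restricts to a proper coloring of $G_{\mathcal{I}}$, and conversely every proper coloring of $G_{\mathcal{I}}$ lifts to $G'$ (up to renaming unused colors). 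Under this decomposition, every critical set of $G'$ splits as a critical set of $G_{\mathcal{I}}$ with respect to the induced coloring plus a predictable $H$-part, so $\lcs(G',c)$ is controlled by $\lcs(G_{\mathcal{I}},c|_{V(G_{\mathcal{I}})})$ up to a known additive constant.

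With this decomposition in hand, I would calibrate the threshold $k$ so that $\ulcs(G') \geq k$ holds if and only if every proper coloring of $G_{\mathcal{I}}$ admits a critical set of at least the Hatami-Tusserkani target size, which in turn is equivalent to $\mathcal{I}$ being a yes-instance of $\Pi$. Since $G'$ is built in time polynomial in $|\mathcal{I}|$, this gives the desired polynomial-time reduction.

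The hard part will be engineering $H$ and its interface so that the \emph{maximum}-critical-set structure of $G'$ is genuinely dictated by $G_{\mathcal{I}}$ uniformly across all proper colorings. For $\uscs$, Hatami-Tusserkani only had to control the minimum critical set size in some coloring, but for $\ulcs$ one must simultaneously rule out spurious large critical sets arising within $H$ itself under any coloring, while still preserving the bijective correspondence between colorings of $G'$ and $G_{\mathcal{I}}$. Striking this rigidity-versus-flexibility balance for $H$ is where the modification of the Hatami-Tusserkani argument truly lives, and is the main technical obstacle I would need to overcome.
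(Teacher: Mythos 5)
There is a genuine gap: your proposal is a reduction template with the actual gadget left as an acknowledged ``main technical obstacle,'' and that missing gadget is precisely where the proof lives. The paper's argument is not a black-box splice of the Hatami--Tusserkani $\uscs$ reduction onto an auxiliary uniquely colorable piece. It reduces from $3$-colorability of a graph $H$ ($n$ vertices, $m$ edges) by an \emph{amplification} construction: each edge $e=xy$ of $H$ is replaced by $m+n+1$ independent degree-two vertices $(e,j)$, each adjacent to both $x$ and $y$, and a disjoint $K_3$ is added to force $\chi(G)=3$. The two facts that drive the equivalence are (i) no critical set contains two colorful vertices with identical neighborhoods, so if an edge $e$ is properly colored at its endpoints then at most \emph{one} of the $m+n+1$ vertices $(e,j)$ lies in any critical set, bounding every critical set by $m+n+2$; and (ii) if $H$ is not $3$-colorable then \emph{every} proper $3$-coloring of $G$ leaves some edge $e$ monochromatic on its endpoints, whence all $m+n+1$ vertices $(e,j)$ are undetermined by their neighbors and must lie in \emph{every} determining set, forcing $\lcs(G,c)\geq m+n+3$ for every coloring $c$. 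Nothing in your outline produces this dichotomy, and you give no mechanism by which the \emph{largest} critical set is forced to be large under \emph{all} colorings --- which is the whole difficulty of $\ulcs$ as opposed to $\uscs$.

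Two further points in your plan actively point the wrong way. First, you posit a bijective correspondence in which ``every proper coloring of $G'$ restricts to a proper coloring of $G_{\mathcal{I}}$''; in the paper's reduction the informative case is exactly the one where this fails --- when $H$ is not $3$-colorable, no proper coloring of $G$ restricts to a proper coloring of $H$, and that failure is what inflates every critical set. (Relatedly, the paper's equivalence has the opposite polarity from yours: $\ulcs(G)\geq k$ iff $H$ is \emph{not} $3$-colorable.) Second, your appeal to Proposition \ref{prop:uniquelycolorableimpliessamesize} to ``pin'' the gadget's contribution does not do the work you want: that proposition controls critical sets of a uniquely colorable graph in isolation, and the only uniquely colorable piece in the paper's construction is the disjoint $K_3$, which contributes a trivial additive $2$; the load-bearing gadget $H'$ is not uniquely colorable and cannot be, since its critical-set size must vary with the coloring. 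To repair your proposal you would need to supply the amplification gadget (or an equivalent device) explicitly and prove the colorful-twin exclusion lemma; as written, the argument does not establish the theorem.
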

\begin{proof} Suppose $H$ is a graph with $n$ vertices and $m$ edges.  We show that determining whether or not $H$ is $3$-colorable is reducible to deciding whether $\ulcs(G) \geq k$ for $k = m + n + 3$ and some graph $G$, $G$ to be defined below (but depending in size only polynomially on $n$ and $m$).  We construct a graph $H^\prime$ as follows: $V(H^\prime) = V_1 \cup V_2$, where $V_1 = V(H)$ and $V_2 = E(H) \times [m+n+1]$, and $E(H)$ consists of all pairs of the form $\{v,(e,j)\}$ where $v \in V_1$, $(e,j) \in V_2$, and $v \in e$.  Then let $G = H^\prime \cup K_3$, and denote the set of vertices arising from the $K_3$ as $V_3 = \{x_1,x_2,x_3\}$.  We claim that $\ulcs(G) \geq k$ if and only if $H$ is not $3$-colorable.  If $H$ is $3$-colorable, then let $c : V(H) \rightarrow [3]$ be such a coloring, and define a $3$-coloring of $G$ by setting, for each $v \in V(G)$,
\[
c^\prime(v) = \left \{ \begin{array}{ll} c(v) & \text{ if } v \in V_1 \\ \xi & \text{ if } v = (xy,j) \in V_2 \text{ where } \xi \not \in \{c(x),c(y)\} \\ j & \text{ if } v = x_j. \end{array} \right .
\]
Then every critical set $S$ for $c^\prime$ has cardinality $< k$, because
\begin{enumerate}
\item No two colorful vertices whose neighborhoods are identical are contained in the same critical set together.  Therefore, if $S$ contains a vertex $(e,j) \in V_2$, then it contains none of the vertices $(e,j^\prime)$ with $j \neq j^\prime$.  This allows for at most $m$ vertices from $V_2$ in $S$.
\item $S$ will not contain all three vertices of $V_3$.
\end{enumerate}
Therefore, $|S| \leq m + n + 2 < k$, and $\ulcs(G) < k$.

In the other direction, suppose $H$ is not $3$-colorable.  Therefore, every $3$-coloring of $H$ contains two adjacent vertices of the same color; in particular, any $3$-coloring of $G$ will have two vertices $v$ and $w$ from $V_1$ so that $vw \in E(H)$ to which it assigns the same color.  One must therefore include every one of the $m+n+1$ vertices $(vw,j)$ in any critical set $S$.  Additionally, $S$ contains $2$ elements of $V_3$, so $|S| \geq m+n+1+2 = k$.
\end{proof}

\begin{theorem} \label{thm:olcshard} It is \textsc{np}-hard to decide, for a given integer $k$ and graph $G$, whether $\olcs(G) \geq k$.
\end{theorem}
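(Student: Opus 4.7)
I will adapt the construction from Theorem \ref{thm:ulcshard}, with one critical modification. Using that construction verbatim would fail: the vertex set $V_1$ is independent there, so a coloring with $c|_{V_1}$ constant combined with splitting the $(e,j)$'s across two different colors forces all of $V_2$ into any determining set, making $\olcs(G) \geq k$ regardless of $H$'s 3-colorability. The remedy is to include $E(H)$ in $E(G)$, so that $c|_{V_1}$ is a proper coloring of $H$ in every proper coloring of $G$.

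\textbf{Construction.} Given $H$ with $n$ vertices and $m$ edges, let $V_1 = V(H)$, $V_2 = E(H) \times [m+n+1]$, $V_3 = \{x_1,x_2,x_3\}$, and
\[
E(G) = E(H) \cup \{\{v,(e,j)\} : v \in e\} \cup \binom{V_3}{2}.
\]
Set $k = m+n+3$. I claim $H$ is 3-colorable if and only if $\olcs(G) < k$, giving NP-hardness by the same reduction-from-3-coloring convention used in Theorem \ref{thm:ulcshard}.

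\textbf{Forward direction.} If $H$ is 3-colorable, then $G \supseteq K_3$ together with a 3-coloring of $H$ extended by the unique third color on each $(e,j)$ and the palette $\{1,2,3\}$ on $V_3$ forces $\chi(G) = 3$. In any proper 3-coloring $c$ of $G$, the restriction $c|_{V_1}$ is a proper 3-coloring of $H$, so $c(v) \neq c(w)$ for every $vw \in E(H)$; consequently each $(e,j)$ is forced to the unique third color, and the $(e,j)$'s for fixed $e$ are colorful twins with identical neighborhoods and identical colors. The ``colorful-twins'' observation from Theorem \ref{thm:ulcshard} then bounds any critical set $S$ by $|S \cap V_2| \leq m$, $|S \cap V_1| \leq n$, and $|S \cap V_3| \leq 2$, giving $|S| \leq m+n+2 < k$. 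Since this holds for every proper coloring, $\olcs(G) < k$.

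\textbf{Reverse direction.} If $H$ is not 3-colorable, then $H \subseteq G$ as an induced subgraph gives $\chi(G) \geq \chi(H) \geq 4$. Extend any proper $\chi(G)$-coloring of $H$ to $G$ by coloring $V_3$ with three distinct colors and, for each edge $e = vw$, assigning the $m+n+1$ copies $(e,j)$ colors drawn from the palette $[\chi(G)] \setminus \{c(v),c(w)\}$ using at least two distinct palette elements (possible because $\chi(G) - 2 \geq 2$). For the resulting coloring $c$: if some $(e,j) \notin S$, then flipping its color to another element of its two-or-more-element palette (leaving all other vertices intact) yields a proper coloring agreeing with $c$ on $S$ but distinct from $c$, contradicting the determining property. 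An analogous argument applies to each vertex of $V_3$. Hence $V_2 \cup V_3 \subseteq S$ for every determining set, and every critical set has size at least $m(m+n+1) + 3 > k$. Therefore $\lcs(G,c) \geq k$ and $\olcs(G) \geq k$.

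\textbf{Main obstacle.} The central technical point is the augmentation $E(G) \supseteq E(H)$: without it, the ``monochromatic $V_1$'' trick inflates $\olcs(G)$ regardless of $H$'s 3-colorability, collapsing the reduction; with it, $c|_{V_1}$ is forced to respect $H$, coupling $\chi(G)$ to $\chi(H)$ and letting the twin-counting argument from Theorem \ref{thm:ulcshard} correctly differentiate the 3-colorable and non-3-colorable cases. The remainder is a straightforward translation.
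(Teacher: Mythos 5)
Your proof is correct, but it uses a genuinely different construction from the paper's. The paper keeps $\chi(G)=3$ in both cases: it splits each vertex $v$ of $H$ into $\deg(v)$ copies $x_{v,e}$ joined across each edge of $H$ (so the induced graph on these copies is just a perfect matching), attaches $2m+2$ twin vertices $y_{v,e,f,j}$ to each pair of copies of the same vertex, and shows that $H$ is $3$-colorable if and only if $\olcs(G)\geq k$ --- colorability lets one color all copies of $v$ alike, giving every $y$ a monochromatic neighborhood and forcing all of $V_2$ into every determining set, while non-colorability guarantees some colorful twin family in every $3$-coloring, capping all critical sets below $k$. You instead embed $H$ itself into $G$ and attach $m+n+1$ twins to each edge, so that $\chi(G)$ itself distinguishes the two cases: when $H$ is $3$-colorable every optimal coloring makes all twins colorful (critical sets stay below $k$), and when it is not, $\chi(G)\geq 4$ gives every twin at least two legal colors, forcing $V_2\cup V_3$ into every determining set. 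Your gadget is smaller and the case analysis arguably cleaner; the trade-off is that your reduction sends $3$-colorable instances to \emph{no}-instances of ``$\olcs(G)\geq k$,'' so under strict many-one reductions you have shown coNP-hardness rather than NP-hardness. Since the paper's own proof of Theorem \ref{thm:ulcshard} adopts exactly the same convention (``$\ulcs(G)\geq k$ iff $H$ is \emph{not} $3$-colorable''), and the two notions coincide under Turing reductions, this is consistent with the paper's usage, though it is worth noting that the paper's $\olcs$ construction avoids the issue by reducing in the positive direction. One small presentational point: in your reverse direction the requirement that the $m+n+1$ copies of each edge-gadget receive at least two distinct colors is unnecessary --- all that matters is that each copy individually has a palette of size at least two, which $\chi(G)\geq 4$ already guarantees.
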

\begin{proof} Suppose $H$ is a graph with $n$ vertices and $m$ edges.  We show that determining whether or not $H$ is $3$-colorable is reducible to deciding whether $\olcs(G) \geq k$ for $k = (2m+2) \sum_{v \in V(H)} \binom{\deg(v)}{2} + 2$ and some graph $G$, $G$ to be defined below (but depending in size only polynomially on $n$ and $m$).  We define a graph $H^\prime$ as follows: $V(H^\prime)$ consists of vertices $V_1$ of the form $x_{v,e}$, where $v \in V(H)$ and $v \in e \in E(H)$ and vertices $V_2$ of the form $y_{v,e,f,j}$ with $v \in V(H)$, $v \in e \in E(H)$, $v \in f \in E(H)$, $e \neq f$, and $j \in [2m+2]$; $E(H^\prime)$ consists of edges of the form $x_{v,vw}x_{w,vw}$ for each $v, w \in V(H)$, $v \neq w$ and edges of the form $x_{v,e}y_{v,e,f,j}$ for $v \in V(H)$, $v \in e \in E(H)$, $v \in f \in E(H)$, and $j \in [2m+2]$.  Define $G = H^\prime \cup K_3$, and denote the vertices of $K_3$ by $V_3 = \{z_1,z_2,z_3\}$.  We claim that $H$ is $3$-colorable if and only if $\olcs(G) \geq k$.  Suppose $H$ is $3$-colorable; let $c$ be such a coloring, and define, for $x \in V(G)$
\[
c^\prime(x) = \left \{ \begin{array}{ll} c(v) & \text{ if } x = x_{v,e} \in V_1 \\ \xi & \text{ if } x = x_{v,e,f,j} \in V_2 \text{ where } \xi = \min([3] \setminus \{c(v)\}) \\ j & \text{ if } x = z_j. \end{array} \right .
\]
Let $S$ be a critical set for $c^\prime$.  Note that $V_2 \subset S$, because the neighbors of each $y_{v,e,f,j}$ receive only one color.  Furthermore, $S$ must contain at least two vertices from $V_2$, so
\[
|S| \geq (2m+2) \sum_{v \in V(H)} \binom{\deg(v)}{2} + 2 = k.
\]
Suppose $H$ is not $3$-colorable.  Then, for every $3$-coloring of $G$, there are two vertices $x_{v,e}$ and $x_{v,f}$, $e \neq f$, which receive distinct colors.  Then $S$ contains at most one of the vertices $y_{v,e,f,j}$ (since these vertices are colorful and have identical neighborhoods) and at most $2$ vertices from $V_3$, so
\[
|S| \leq (2m+2) \sum_{v \in V(H)} \binom{\deg(v)}{2} + 2m + 2 - (2m + 1) = k - 1.
\]
Therefore, $\olcs(G) < k$.
\end{proof}

\section{Conclusion and Questions}

We conclude with several open questions we find interesting, some of which have appeared in other guises in the literature.  Of course, a central problem is to find a human-readable proof of the (seeming) fact that $\uscs(\sud_3) = 17$, as well as an understanding of the growth rate of $\uscs(\sud_k)$.

\begin{enumerate}
\item Is the converse of Proposition \ref{prop:uniquelycolorableimpliessamesize} true?  That is, if every critical set of a graph $G$ has cardinality $\chi(G)-1$, must $G$ be uniquely colorable?  We have verified computationally that this statement is indeed true for every graph on at most $8$ vertices.
\item What is the list-chromatic number of $\sud_3$?  This is the smallest $k$ so that, if every vertex of $\sud_3$ is associated with a list of $k$ distinct colors, there will be an assignment of colors to vertices from their associated lists which is a proper coloring.  Clearly, $\ch(\sud_3) \geq 9$, but is it equal to $9$?
\item One can define parametrized versions of the four functions $\uscs$, $\oscs$, $\ulcs$, $\olcs$, by asking for the smallest/largest over all proper $k$-colorings -- $k$ not necessarily equal to the chromatic number -- of the size of the smallest/largest critical set for that coloring.  Are these parameters all monotone nondecreasing in $k$?
\item Is there a simple characterization of critically uniform graphs?  By doing an extensive computation, we have verified that there are many graphs with this property which are {\em not} uniquely colorable.
\item What are $\uscs(G)$, $\oscs(G)$, $\ulcs(G)$, and $\olcs(G)$ for $G = K_n \Box K_n$, i.e., Latin squares?  A superlinear lower bound on $\uscs(K_n \Box K_n)$ was obtained only relatively recently (\cite{C07}), for example.
\end{enumerate}

\section{Acknowledgements}

\noindent The authors would like to thank the South Carolina Governor's School for Science and Mathematics, the University of South Carolina Office of Undergraduate Research and the Magellan Scholar Program, and the National Science Foundation for their support, and Chris Boyd and Nick Smith for their valuable insights and assistance.

\bibliographystyle{model1a-num-names}

\end{document}